\def\Real{{I\!\!R}}
\newtheorem{definition}{Definition}[section]
\newtheorem{theorem}{Theorem}[section]
\newenvironment{proof}{{\it Proof. }}{\hfill $\Box$}
\newenvironment{remark}{{\it Remark. }}{\hfill $\triangleleft$ \medskip}
    \newcommand{\bea}{\begin{eqnarray}}
     \newcommand{\eea}{\end{eqnarray}}
     \newcommand{\bean}{\begin{eqnarray*}}
     \newcommand{\eean}{\end{eqnarray*}}
\def\ds{\displaystyle}
\def\RR{\mathbb R}
\def\[{\begin{equation}}
\def\]{\end{equation}}
\def\RR{\mbox{I}\!\mbox{R}}
\def\Box{\vrule width1ex height1ex}
\def\ds{\displaystyle}
\newcommand{\EQ}{\begin{equation}\begin{array}{lllllllll}}
\newcommand{\EE}{\end{array}\end{equation}}
\newcommand{\MT}{\left( \begin{array}{ccccccccc}}
\newcommand{\EM}{\end{array}\right)}
\newcommand{\LM}{\left(\begin{array}{ccccccccc}}
\newcommand{\RM}{\end{array}\right)}
\newcommand{\LA}{\left\{ \begin{array}{ccccccccc}}
\newcommand{\RA}{\end{array}\right.}
\newcommand{\RAA}{\end{array}\right\}}
\title{\bf A 
characterization of normal forms for control systems}
\date{}
\author{\begin{tabular}{l@{\hspace{5mm}}l@{\hspace{5mm}}l} 
\multicolumn{3}{c}{} \\  
Boumediene Hamzi\footnote{Parts of this work were done while at Department of Mathematics, Duke University, Durham, NC 27708, USA. } & Jeroen S.W. Lamb & Debra Lewis \\
 Department of Mathematics & Department of Mathematics & Mathematics Department \\
Imperial College London & Imperial College London &   UC Santa Cruz \\ 
London, SW7 2AZ, UK & London, SW7 2AZ, UK &    Santa Cruz, CA 95064, USA  \\
b.hamzi@imperial.ac.uk & jeroen.lamb@imperial.ac.uk & lewis@ucsc.edu
\end{tabular}
}
\begin{document}
\jot 0.7em

\maketitle
%
%
\section{Introduction}

The study of the behavior of solutions of ODEs often benefits from deciding on a convenient choice
of coordinates. This choice of coordinates may be used to ``simplify'' the functional expressions that 
appear in the vector field in order that the essential features of the flow of the ODE near a critical point 
become more evident. In the case of the analysis of an ordinary differential equation in the neighborhood of
an equilibrium point, this naturally leads to the consideration of the possibility to remove the maximum
number of terms in the Taylor expansion of the vector field up to a given order. This idea was introduced by
H. Poincar\'e in \cite{poincare} and the ``simplified'' system is called normal form. There have been several
applications of the method of normal forms particularly in the context of bifurcation theory where one
combines between the method of normal forms and the center manifold theorem in order to classify bifurcations
\cite{guckenheimerholmes}. This approach was extended to control systems in continuous-time by Kang and Krener
(\cite{kang_and_krener1},  see also \cite{kang_and_krener} for a survey) and Tall and Respondek
(\cite{tall1}, see \cite{tall_survey} for a survey), and by Barbot et al. \cite{barbot} and Hamzi et al. in
discrete-time \cite{BH25,BH26}. The center manifold theorem was extended to control systems by Hamzi et al.
\cite{BH2,BH6} and combined with the normal forms approach to analyze and stabilize systems with
bifurcations in continuous and discrete-time \cite{BH8, BH17,BH18}.

On another side, even though in many textbook treatments (see eg \cite{guckenheimerholmes}) the emphasis is 
on the reduction of the number of monomials in the Taylor expansion, one of the main reasons for the success
of normal forms lies in the fact that it allows to analyze a dynamical system based on a simpler form and a
simpler form doesn't necessarily mean to remove the maximum number of terms in the Taylor series expansion.
This observation, led to introduce the so-called ``inner-product normal forms'' in \cite{belitskii,meyer1,
elphick}. They are based on properly choosing an inner product that allows to simplify the computations. This
inner-product will characterize the space overwhich one performs the Taylor series expansion. The elements in
this space are the ones that characterize the normal form. Our goal in this paper is to generalize such an
approach to control systems.

In section \S 2, we review some results about normal forms. In section \S 3, we develop a new method for deriving normal forms for control systems.

\section{Normal forms near equilibria of ODEs}\label{sec:nf}

In this section we briefly review some results on normal forms near equilibria of nonlinear ODEs.

Consider the nonlinear ODE in $\RR^n$ 
\[
\label{ode} 
\dot{x}=Ax+f(x),
\]
with $f \in {C}^{r+1}(\RR^n;\RR^n)$, $f(0)=0$ and $A=\frac{\partial f}{\partial x}|_{x=0}$ is in real or complex Jordan form.
Without loss of generality the latter condition can be met by application of a linear coordinate transformation.

The goal is to find a change of coordinates 
\[
\label{chancor} 
x=\xi(y),
\]  with $\xi \in C^r(\RR^n;\RR^n)$ in a neighborhood  of the origin,
such that the Taylor expansion of (\ref{ode}) is simple, making essential features of the flow
 of (\ref{ode}) near the equilibrium $x=0$ more evident. The desired simplification of (\ref{ode}) will be
obtained, up to terms of a specified order, by constructing a near identity  coordinate transformation from a
sequence of compositions of coordinate transformations of the form  
(\ref{chancor}) with
\[
\label{chancor1}
\xi(y)=\exp(\xi^{[k]})(y)=y+\xi^{[k]}(y)+O(|y|^{k+1}),
\]
where $y\in\RR^n$ is close to zero, $\xi^{[k]}\in H_n^k$ ($k \ge 2$), the vector space of
 homogeneous polynomials of degree $k$ in $n$ variables with values in $\RR^n$, and 
 $\exp(\xi^{[k]})$ denotes the time-one flow of the ODE $\dot{y}=\xi^{[k]}(y)$. 
 We consider a formal power series expansion of $f$ in (\ref{ode}) and write
 \[
 f(x)=f^{[2]}(x)+f^{[3]}(x)+\ldots,
 \]
with $f^{[k]}\in H_n^k$. From (\ref{chancor1}) we obtain
\[\label{chancor2} \xi^{-1}(y)=y-\xi^{[k]}(y)+O(|y|^{2k}). 
\]
Substituting  (\ref{chancor}), (\ref{chancor1}) and (\ref{chancor2}) in (\ref{ode}), we get
\[
\label{y_eqn} \dot y={ A} y+\cdots+f^{[k-1]}(y)
+
f^{[k]}(y)-(L_{A} \xi^{[k]})(y)
+O(|y|^{k+1}),
\]
with the Lie derivative $L_{A}$ defined on vector fields $f$ as
\[
(L_{ A}f)(y):=\frac{\partial f(y)}{\partial y}{A}y-{A}f(y).
 \]
 In the present context $L_A$ is also known as the \emph{homological} operator.

The Lie derivative leaves $H_n^k$ invariant, $L_{\cal A}:H_n^k\to H_n^k$. We denote its range in $H_n^k$ as 
${\mathcal{R}}^k$ and let ${\mathcal{C}}^k$ denote a complement  of
${\mathcal{R}}^k$ in $H_n^k$ 
\[
\label{eqn:decompesvec}
H_n^k={\mathcal{R}}^k \oplus {\mathcal{C}}^k, \quad k\ge 2. 
\]
We define a \emph{normal form} of $f$ of order $r$ as a Taylor expansion of the vector field with linear part and terms  $f^{[k]}\in{\mathcal{C}}^k$ for $2\leq k\leq r$. 
We may associate the choice of complement ${\mathcal{C}^k}$ to an inner product on $H_n^k$, for which it is the \emph{orthogonal} complement of ${\mathcal{R}}^k$ in $H_n^k$, i.e.\,
$\mathcal{C}^k:=({\mathcal{R}}^k)^{\perp}.$

A convenient choice of inner product was introduced by Belitskii \cite{belitskii}, Meyer \cite{meyer1} and Elphick \emph{et al.} \cite{elphick}, enabling the characterization of expression of ${\mathcal{C}^k}$ as the kernel of the Lie derivative of $A^\ast$ (the adjoint of linear part $A$ of the vector field at the equilibrium).
Denoting monomials in shorthand notation as $x^\ell:=x_1^{\ell_1}\cdots x_n^{\ell_n}$ with $\ell!:=\ell_1!\cdots\ell_n!$, we define an inner product on polynomials
%
\[ \label{innerp}
p(x)=\sum_{\ell}p_{\ell}x^{\ell}, \quad q(x)=\sum_m q_m x^m,~~\mbox{as}~~
\langle p,q \rangle=\sum_m m! p_
m q_m.
\end{equation}
For vector polynomials we define the corresponding inner product as the sum of the inner products between the polynomials of corresponding vector components. 
The inner product (\ref{innerp}) with $T\in gl(n, \RR)$ and $T^{\ast}$ denoting its adjoint (with respect to
 the standard inner product on $\RR^n$) satisfies \cite{belitskii,elphick} 
\[
\langle p \circ T, q\rangle=\langle p, q\circ T^{\ast}\rangle. 
\]
Accordingly, one obtains that the adjoint of $L_A$ on $H_n^k$ with the above defined inner product
 satisfies the following
relation \cite{belitskii,elphick}
\begin{equation}\label{adjoint}
(L_A)^{\ast}=L_{A^{\ast}}.
\end{equation}
By application of the Fredholm alternative, it follows that 
$({{\mathcal{R}}^k})^{\perp}=\ker(L_{A}^{\ast}|_{H_n^k})$. In combination with (\ref{adjoint}), this leads us
to $$\mathcal{C}^k= \ker(L_{A^{\ast}}|_{H_n^k}),$$ as a result of which nonlinear elements of the normal form
$g$ satisfy the linear PDE \[L_{A^{\ast}}g=0.\]
This  PDE can be solved explicitly using the method of  characteristics (for more details on this method, see for example \cite{courant}). 

We recall that since $L_{A^{\ast}}$ is a Lie derivative, it follows that the nonlinear elements of the normal form commute with the group
\[
G=\overline{\{\exp(A^{\ast}t)~|~t\in\RR\}}.
\]


We finally note that \[\ker(L^k_{A^{\ast}})=\ker(L^k_{A_s^{\ast}})\cap \ker(L^k_{A_n^{\ast}}),\] where
$A=A_s+A_n$ is the Jordan-Chevalley decomposition of $A$ in its (mutually commuting) semi-simple and nilpotent
parts.	As $A^\ast$ commutes with $A_s$ but not with $A_n$ (if nonzero), if $A$ is not semi-simple, only a
subgroup of $G$ (as defined above) is a symmetry group of the normal form. 
In general, with the above choices made, the normal form is equivariant with respect to the group 
\[
G_s=\overline{\{\exp(A^{\ast}_st)~|~t\in\RR\}}.
\]
The appearance of this symmetry group is an important feature.

\section{Normal Forms of Nonlinear Control Systems} \label{sect_control}

The object of this section is to extend the normal form theory set out above to nonlinear control systems. 
We consider the nonlinear control system 
\[
\label{sysgen}
\dot{x}=f(\tilde{x}),
\] 
with $\tilde{x}=(x,u)^T \in  \RR^n \times \RR^m$ and $u\in\RR^m$ representing the control. 
In lowest linear order Taylor expansion, the control system takes the form 
$$
\dot{x}={\cal A}\tilde{x} 
+  O(|\tilde{x}|^2),
$$
with ${\cal A}:=\left(\begin{array}{cc} A & B \end{array} \right)$ and $A:=\frac{\partial f(\tilde{x})}{\partial x}|_{\tilde{x}=0}$, $B:=\frac{\partial f(\tilde{x})}{\partial u}|_{\tilde{x}=0}$ .  

We consider the effect of coordinate transformations of the form
\[
\label{chancort}
\tilde{x}=p(\tilde{y})
=\exp(p^{[k]})(\tilde{y})= \tilde{y}+p^{[k]}(\tilde{y})+O(|\tilde{y}|^{k+1}),
\]
where $\tilde{y}=(y,v)^T$ and $p^{[k]}\in S^k_{n+m,n}$ with
\[
S^k_{n,m}:=\{(p^{[k]}_x, p^{[k]}_u)^T~|~ p^{[k]}_x\in H_n^k,~p^{[k]}_u\in H_{n+m,m}^k\},
\] 
with $H_{n+m,m}^k$ denoting the vector space of homogeneous polynomials of degree $k$ from $\RR^{n+m}$ to $\RR^m$.
The skew product form of $p^{[k]}\in S^k_{n,m}$,  $p^{[k]}(\tilde{y})=(p^{[k]}_x(y), p^{[k]}_u(\tilde{y}))^T$,  guarantees that the control system is transformed to another control system of the same type. If $p_x$ would
depend on $u$ then the coordinate transformation would introduce a relationship involving $\dot{u}$. We obtain
\[
\dot y=\mathcal{A} \tilde{y}+\cdots+f^{[k-1]}(\tilde{y})
+f^{[k]}(\tilde{y})-(\mathcal{L}_{\mathcal{A}} p^{[k]})(\tilde{y})
+O(|\tilde{y}|^{k+1}),
\]
where the homological operator $\mathcal{L}_{\mathcal{A}}: S^k_{n,m}\to H^k_{n+m,n}$  has the form
\[\label{homeqcontrol}
({\cal L}_{\cal A}p^{[k]})(\tilde{y})=Dp_x^{[k]}(y){\cal A}\tilde{y}-{\cal A} p^{[k]}(\tilde{y})=
(L_A p_x^{[k]})(y)+Dp_x^{[k]}(y)Bu-Bp_u^{[k]}(\tilde{y}).
\]
We recognize in this expression the Lie derivative $L_A$, that is equal to  ${\mathcal L}_{\mathcal A}$ in case $B=0$. Indeed, $f(y,0)$ (the part of $f$ that does not depend on $u$) can be put into a
$G_s$-equivariant  normal form, using coordinate transformations of the form $p(\tilde{y})=(\exp(p_x)(y),u)$ only. 

We now proceed to characterize a normal form by a (choice of) complement of the range 
of $\mathcal{L}_\mathcal{A}$. In order to do so in analogy to the theory developed for ODEs, we temporarily
take the viewpoint as if the coordinate transformation would be for the ODE $(\dot{x}, \dot{u})=(f(x,u),h(x,u))$,
for some $h:\Real^{m+n}\to\Real^m$ with $D_xh(0,0)=0$ and $D_uh(0,0)=0$.
The homological operator for the latter ODE, with coordinate transformations of the form 
(\ref{chancort}) takes precisely the form of the Lie derivative $L_{\mathcal{A}_0}$, with 
$$
\mathcal{A}_0=\left( \begin{array}{cc}A& B\\0&0\end{array}\right),
$$ 
so that $L_{\mathcal{A}_0}=(\mathcal{L}_\mathcal{A},0)$ and  $\mathcal{L}_\mathcal{A}:=\pi
L_{\mathcal{A}_0}$, 
with $\pi:\Real^{m+n}\to\Real^n$ denoting the canonical projection $\pi(x,u):=x$.

 We may thus choose the
complement  of the range of $\mathcal{L}_\mathcal{A}$ as the projection under $\pi$ of the orthogonal
complement $\mathcal{C}^k$ to the range of $L_{\mathcal{A}_0}$ taken with respect to the inner product
(\ref{innerp}) with $x_{n+i}=u_i$, $i=1,\ldots, m$, i.e.
\[
\mathcal{C}^k:=
\{q\in H^k_{m+n}~|~ \langle q, L_{\mathcal{A}_0}p\rangle=0,~\forall p\in S^k_{n,m}\}.
\]
By the Fredholm alternative we have
\[
 \langle q, L_{\mathcal{A}_0}p\rangle=
  \langle L_{\mathcal{A}_0^*} q, p\rangle,
\]
so that this complement takes the form
\[
\mathcal{C}^k:=
\{ q\in H^k_{m+n}~|~ L_{\mathcal{A}_0^*}q\in (S^k_{n,m})^\perp\}.
\]
By the definition of the inner product (\ref{innerp}), 
$$(S^k_{n,m})^\perp=
\{ q\in H^k_{m+n}~|~ q(x,0)=0\},$$ 
i.e.\, the subset of vector  polynomials in $H^k_{m+n}$ for which each constituting monomial contains a factor $u_i$, $i=1,\ldots, m$.
The complement to the range of $\mathcal{L}_\mathcal{A}$ characterising the corresponding normal form is $\pi\mathcal{C}^k$. By writing out the relevant operators, the following result follows immediately.


 \begin{theorem}[Control normal form] 
Consider a finite order in Taylor expansion of the vector field defining the control system (\ref{sysgen}), $$f(\tilde{x})={\cal A}\tilde{x} +\sum_{k=2}^N f^{[k]}(\tilde{x})+O(|\tilde{x}|^{k+1}), \mbox{~with~}f^{[k]}\in H_{m+n,n}^k.$$ 
By a choice of coordinates, the nonlinear parts $f^{[k]}$  can be made to satisfy
\[\label{nfeqn} \hat\mathcal{L}_{\mathcal{A}^*}{f}^{[k]}(x,0)=0\] where
\[\label{pdehom}
\hat\mathcal{L}_{\mathcal{A}^*}{{f}^{[k]}}(\tilde{x}):=D_{\tilde{x}} f^{[k]}(\tilde{x}) {\cal A}^* x - A^* f^{[k]}(\tilde{x}),
\]
and $\tilde{x}=(x,u)$.
\end{theorem}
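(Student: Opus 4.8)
The plan is to translate the problem about the control system into a problem about the ODE
$(\dot x,\dot u)=(f(x,u),h(x,u))$ on $\Real^{m+n}$, apply the ODE normal-form theory recalled in Section~\ref{sec:nf} verbatim to the extended linear part $\mathcal{A}_0$, and then push the result back through the projection $\pi$. More precisely, first I would recall that, for the ODE normal form with linear part $\mathcal{A}_0$, the homological operator on $S^k_{n,m}\subset H^k_{m+n,m+n}$ is $L_{\mathcal{A}_0}$, and that the orthogonal complement of its range, with respect to the inner product (\ref{innerp}) (now in the $m+n$ variables $x_1,\dots,x_n,u_1,\dots,u_m$), is $\mathcal{C}^k=\{q\in H^k_{m+n}\mid L_{\mathcal{A}_0^*}q\in(S^k_{n,m})^\perp\}$. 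This is exactly the Fredholm-alternative computation already carried out in the text just above the theorem; I would simply record it as the starting point.

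Next I would compute $L_{\mathcal{A}_0^*}$ explicitly and read off what the membership condition $L_{\mathcal{A}_0^*}q\in(S^k_{n,m})^\perp$ means component-wise. Since
\[
\mathcal{A}_0^*=\begin{pmatrix}A^*&0\\ B^*&0\end{pmatrix},
\]
we have $(L_{\mathcal{A}_0^*}q)(\tilde x)=D_{\tilde x}q(\tilde x)\,\mathcal{A}_0^*\tilde x-\mathcal{A}_0^* q(\tilde x)$, and $\mathcal{A}_0^*\tilde x=(A^*x,\,B^*x)^{\mathrm T}$ depends only on $x$, not on $u$. Writing $q=(q_x,q_u)^{\mathrm T}$ with $q_x\in H^k_{m+n,n}$ and $q_u\in H^k_{m+n,m}$, the condition $L_{\mathcal{A}_0^*}q\in(S^k_{n,m})^\perp$ says, by the characterization $(S^k_{n,m})^\perp=\{q\mid q(x,0)=0\}$ from the text, that $(L_{\mathcal{A}_0^*}q)(x,0)=0$. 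Since $p^{[k]}\in S^k_{n,m}$ has $x$-component depending only on $y$, the relevant constraint is only on the $\Real^n$-block: projecting with $\pi$ and setting $u=0$, the $x$-component of $L_{\mathcal{A}_0^*}q$ evaluated at $(x,0)$ is $D_x q_x(x,0)\,A^*x + D_u q_x(x,0)\,B^*x - A^* q_x(x,0)$, which is precisely $(\hat{\mathcal{L}}_{\mathcal{A}^*}q_x)(x,0)$ with $\hat{\mathcal{L}}_{\mathcal{A}^*}$ as defined in (\ref{pdehom}) (here $D_{\tilde x}q^{[k]}(\tilde x)\mathcal{A}^*x$ unpacks as $D_xq^{[k]}A^*x+D_uq^{[k]}B^*x$). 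Identifying $f^{[k]}$ with the $x$-component $\pi q$ of an element $q\in\mathcal{C}^k$ then gives (\ref{nfeqn}).

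Finally I would assemble the argument: starting from the finite Taylor expansion of $f$, apply the ODE normal-form procedure of Section~\ref{sec:nf} order by order, $k=2,\dots,N$, using only coordinate transformations of skew-product form $p^{[k]}\in S^k_{n,m}$ (which, as observed in the text, keep the system in control form and do not introduce $\dot u$); at each order the transformation removes the component of $f^{[k]}$ in the range of $\mathcal{L}_{\mathcal{A}}=\pi L_{\mathcal{A}_0}$, leaving a representative in $\pi\mathcal{C}^k$, i.e. one satisfying (\ref{nfeqn}). The main obstacle — and the one point deserving genuine care rather than a routine check — is the bookkeeping around the projection $\pi$ and the restriction $u=0$: one must verify that the orthogonality condition defining $\mathcal{C}^k$, which lives in the full space $H^k_{m+n}$, collapses to a condition on $f^{[k]}(x,0)$ alone, and that no information is lost by discarding the $u$-components $q_u$ and the $u$-dependence at $u\neq 0$. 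This works precisely because $p^{[k]}_x$ is independent of $u$ and $(S^k_{n,m})^\perp$ consists of polynomials vanishing at $u=0$, so the pairing $\langle q,L_{\mathcal{A}_0}p\rangle$ only ever tests $q$ against objects whose $x$-block is $u$-independent; making this reduction explicit is the crux of the proof.
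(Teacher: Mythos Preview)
Your proposal is correct and follows exactly the approach of the paper. The paper does not give a separate proof: it derives the characterization in the text leading up to the theorem by passing to the extended ODE with linear part $\mathcal{A}_0$, identifying $\mathcal{L}_\mathcal{A}=\pi L_{\mathcal{A}_0}$, applying the Fredholm alternative with the inner product (\ref{innerp}) to obtain $\mathcal{C}^k=\{q:L_{\mathcal{A}_0^*}q\in(S^k_{n,m})^\perp\}$, and then declaring that ``by writing out the relevant operators, the following result follows immediately'' --- you have simply supplied that write-out.
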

\begin{remark}
We note that by restricting first to coordinate transformations that do not involve $u$, we can achieve $G_s$-equivariance of the control system to any desired order. Then we can refine the normalization further using $G_s$-equivariant coordinate transformations that preserve this equivariance.
\end{remark}


\section{Illustrations}
\subsection{Linearly Controllable Case}

To illustrate this method, consider the nonlinear control system  $\Sigma$ in (\ref{sysgen}) with one input, i.e. $m=1$, and assume that its linearization is controllable. 
From linear  control theory we know that there exists a linear change of coordinates and feedback that allows to transform  the linear part  in the Brunovsk\`y form, i.e.  

\begin{eqnarray} \label{matr}
A&=&\left(\begin{array}{ccccc}0 & 1 & 0 & \cdots & 0\\ 0 & 0 & 1
& \cdots & 0\\ \vdots & \vdots & \vdots & \ddots & \vdots \\ 0 & 0
& 0& \cdots & 1 \\ 0 & 0 & 0 &\cdots & 0
\end{array}
\right),\; B= \left(
\begin{array}{c}0 \\ 0 \\ \vdots \\ 0 \\ 1
\end{array}
\right).
\end{eqnarray}

In this case, the PDE (\ref{pdehom}) becomes
\[\label{pde_lincont} \left\{\begin{array}{rcl}
\ds x_1 \frac{\partial p_1}{\partial x_2}+\cdots+x_{n-1} \frac{\partial p_1}{\partial x_n}+x_n  \frac{\partial p_1}{\partial u}&=& 0\\
\ds x_1 \frac{\partial p_2}{\partial x_2}+\cdots+x_{n-1} \frac{\partial p_2}{\partial x_n}+x_n  \frac{\partial p_2}{\partial u}-p_1&=& 0\\
&\vdots& \\
\ds x_1 \frac{\partial p_n}{\partial x_2}+\cdots+x_{n-1} \frac{\partial p_n}{\partial x_n}+x_n  \frac{\partial p_n}{\partial u}-p_{n-1}&=& 0
\end{array}\right. \]
that we'll solve using the method of characteristics. 

\begin{theorem}
Consider the nonlinear control system $\Sigma$ given by (\ref{sysgen}). There exist a change of coordinates and feedback (\ref{chancort}) such that $\Sigma$ writes as
\[
\begin{array}{rcl} 
\ds \dot{x}_1&=& \ds x_2+\Phi_1(\ell_1,\cdots,\ell_{r+1}),\\
\ds \dot{x}_2&=& \ds x_3+\Phi_2(\ell_1,\cdots,\ell_{r+1})+\int p_1(x,u) \frac{dx_2}{x_1},\\
&\vdots& \\
\ds \dot{x}_n&=& \ds u+ \Phi_n(\ell_1,\cdots,\ell_{r+1})+\int p_{n-1}(x,u) \frac{dx_2}{x_1},
\end{array}
\]
with  $\Phi_i(\ell_1,\cdots,\ell_n)$   are functions satisfying \begin{eqnarray} \label{conditions}
 \frac{\Phi_i(\ell_1,\cdots,\ell_n)}{x_1^p}\bigg{|}_{x_1=0}=0 & \mbox{for} & p=0,\cdots,n-i, \\
\end{eqnarray}
and $\ell_1(x)=x_1$, $\ds \ell_2(x)=\frac{x_2^2}{2}-x_1x_3$, $\cdots$,  
$\ds \ell_{i}(x)=\frac{1}{2}x_i^2+\sum_{k=1}^{n-p}(-1)^k x_{i-k}x_{i+k}$ for $i=2,\cdots,r+1$,  and $r$ is such that $r=n/2$ if $n$ is even, and  $r=(n-1)/2$ if $n$ is odd  (here, $x_0=0$ and  $x_{n+1}=u$)
\end{theorem}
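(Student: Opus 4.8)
The plan is to reduce the statement to solving the homological PDE of Theorem~3.1 and then to integrate that PDE explicitly by the method of characteristics. For a single-input system with controllable linearization there is first a linear change of coordinates and linear feedback putting the linear part in the Brunovsk\`y form (\ref{matr}); once this is done, Theorem~3.1 guarantees that, order by order in $k$, a further transformation of the type (\ref{chancort}) can be applied so that the nonlinear terms of the control system satisfy the homological equation, which in Brunovsk\`y coordinates is precisely the triangular system (\ref{pde_lincont}) for $p=(p_1,\dots,p_n)=(f^{[k]}_1,\dots,f^{[k]}_n)$. Since in these coordinates the system reads $\dot x_i=x_{i+1}+p_i+O(|\tilde{x}|^{k+1})$ with the conventions $x_0:=0$, $x_{n+1}:=u$, it suffices to describe the general regular solution of (\ref{pde_lincont}); reinserting it into $\dot x_i=x_{i+1}+p_i$ gives the asserted closed form.

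\textbf{Characteristics and invariants.} Each line of (\ref{pde_lincont}) is a first-order linear PDE driven by one and the same characteristic vector field $X:=x_1\,\partial_{x_2}+x_2\,\partial_{x_3}+\cdots+x_{n-1}\,\partial_{x_n}+x_n\,\partial_u$ on $\RR^{n+1}$, the linear field attached to the nilpotent matrix $\mathcal{A}_0^*$. I would first integrate the characteristic system $\dot x_1=0$, $\dot x_j=x_{j-1}$ for $2\le j\le n$, $\dot u=x_n$: along a characteristic $x_1$ is constant, $dx_2=x_1\,dt$, and the remaining coordinates are polynomials in the parameter $t$. Next I would write down the polynomial first integrals $\ell_1=x_1$ and $\ell_i=\frac12 x_i^2+\sum_{k\ge 1}(-1)^k x_{i-k}x_{i+k}$ (the sum over those $k$ for which both $x_{i-k}$ and $x_{i+k}$, with $x_{n+1}=u$, are present), and verify directly that $X\ell_i=0$: the computation telescopes, leaving a single boundary term proportional to $x_{i-K_i-1}x_{i+K_i}$ with $K_i=\min(i-1,\,n+1-i)$, which vanishes exactly when $K_i=i-1$, i.e. for $i\le(n+2)/2$; this is the origin of the bound $i\le r+1$ and of the parity case distinction on $n$. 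The first line $Xp_1=0$ then gives $p_1$ as a first integral of $X$, which I would take in the form $p_1=\Phi_1(\ell_1,\dots,\ell_{r+1})$.

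\textbf{Recursion and regularity.} For the remaining lines I would argue recursively: $Xp_i=p_{i-1}$ is solved by an arbitrary first integral plus one particular solution, and along characteristics a particular solution is produced by the quadrature $\int p_{i-1}\,dt=\int p_{i-1}\,\frac{dx_2}{x_1}$ (using $dx_2=x_1\,dt$), so that $p_i=\Phi_i(\ell_1,\dots,\ell_{r+1})+\int p_{i-1}\,\frac{dx_2}{x_1}$, exactly as in the statement. Finally I would impose that each $p_i=f^{[k]}_i$ be a genuine homogeneous polynomial, hence regular across $\{x_1=0\}$: unwinding the recursion, a term $\Phi_j$ introduced at level $j$ undergoes $n-j$ further quadratures $\int(\cdot)\,\frac{dx_2}{x_1}$ before it reaches level $n$, and the only way these iterated divisions by $x_1$ do not produce a pole along $\{x_1=0\}$ is that $\Phi_j$ vanish there to order $n-j+1$, i.e. $\Phi_j/x_1^p\big|_{x_1=0}=0$ for $p=0,\dots,n-j$, which is precisely (\ref{conditions}).

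\textbf{Main obstacle.} The hard part is this last regularity analysis together with the completeness claim underlying it. One has to (i) control the polynomial first integrals of the nilpotent field $X$ well enough to know that $p_1$ (and, at each step, the homogeneous part of the solution) is captured by a function of $\ell_1,\dots,\ell_{r+1}$, and (ii) follow carefully how the nested quadratures $\int\cdots\int(\cdot)\,\frac{dx_2}{x_1}$ interact with the requirement that every $p_i$ be polynomial, in order to land on exactly the vanishing orders in (\ref{conditions}). The technical delicacy throughout is that $X$ is tangent to the hyperplane $\{x_1=0\}$, where the characteristics degenerate and the naive method of characteristics fails, so the explicit formulas must be interpreted and the regularity verified there with extra care.
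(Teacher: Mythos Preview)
Your proposal is correct and follows essentially the same route as the paper: reduce to the PDE (\ref{pde_lincont}), integrate the characteristic system $\dot x_1=0,\ \dot x_j=x_{j-1},\ \dot u=x_n$, identify the invariants $\ell_i$, solve the resulting ODEs $dp_i/ds=p_{i-1}$ recursively with $ds=dx_2/x_1$, and impose the divisibility conditions (\ref{conditions}) so that the $p_i$ are polynomial. If anything, your write-up is more explicit than the paper's on the two delicate points you flag---the direct verification that $X\ell_i=0$ with the parity-dependent cutoff $i\le r+1$, and the derivation of the vanishing orders in (\ref{conditions}) from the $n-j$ iterated divisions by $x_1$---whereas the paper handles both by a brief ``one can check''.
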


\begin{proof}

In the $n-$dimensional space of the variables $x_1$, $x_2$,$\cdots$,$x_n$ we determine the curves $x_i=x_i(s)$ in terms of a parameter $s$ by means of the system of
ordinary differential equations that represent the characteristic curves
\[\label{characteristics} \left\{\begin{array}{rcl}
  \ds \frac{d x_1}{d s}&=&0 \\
\slabel{dt_eqn} \ds \frac{d x_2}{d s}&=&x_1 \\
&\vdots& \\
\ds   \frac{d x_n}{d s}&=&x_{n-1} \\
 \ds \frac{d u}{d s}&=&x_{n}
\end{array}\right. \]

Along the characteristic curves and using the chain rule, the systems of PDEs (\ref{pdehom}) writes as
\[\label{peqn_lincont}\begin{array}{rclll}
  \ds \frac{d p_1}{d s}&=& \ds \frac{d x_1}{d s} \frac{\partial p_1}{\partial x_1}+  \frac{d x_2}{d s} \frac{\partial p_1}{\partial x_2}+\cdots+ \frac{d x_n}{d s} \frac{\partial p_1}{\partial x_n}+\frac{d u}{d s}  \frac{\partial p_1}{\partial u}&=&0\\
\ds  \frac{d p_2}{d s}&=& \ds \frac{d x_1}{d s} \frac{\partial p_1}{\partial x_1}+  \frac{d x_2}{d s} \frac{\partial p_1}{\partial x_2}+\cdots+ \frac{d x_n}{d s} \frac{\partial p_1}{\partial x_n}+\frac{d u}{d s}  \frac{\partial p_1}{\partial u}&=&p_1  \\
&\vdots& \\
\ds \frac{d p_n}{d s}&=& \ds \frac{d x_1}{d s} \frac{\partial p_1}{\partial x_1}+  \frac{d x_2}{d s} \frac{\partial p_1}{\partial x_2}+\cdots+ \frac{d x_n}{d s} \frac{\partial p_1}{\partial x_n}+\frac{d u}{d s}  \frac{\partial p_1}{\partial u}&=&p_{n-1} 
\end{array}\]
Hence, along the characteristic curves defined by (\ref{characteristics}),  the  systems of PDEs (\ref{pdehom}) transforms into a set of ODEs
\[\label{peqn_odes}\begin{array}{rcl}
\ds  \frac{d p_1}{d s} &=& \ds 0 \\
 \ds \frac{d p_2}{d s}  &=& \ds p_1 \\
&\vdots& \\
 \ds \frac{d p_n}{d s} &=& \ds p_{n-1}\end{array}
\]
This system of ODEs can be solved explicitly 
$$\begin{array}{rcl}
\ds   p_1(s) &=& \ds c_1 \\
 \ds p_2(s)  &=& \ds c_2+\int p_1(s) ds \\
&\vdots& \\
 \ds  p_n(s) &=& \ds c_n + \int p_{n-1}(s)ds \end{array}
$$

The ``constants of integration'', $c_i$, are the constants along the characteristic curves which are the trivial first integrals of the system (\ref{characteristics}). One can check that
they are given by $\ell_1(x)=x_1$, $\ds \ell_2(x)=\frac{x_2^2}{2}-x_1x_3$, $\cdots$,  
$\ds \ell_{i}(x)=\frac{1}{2}x_i^2+\sum_{k=1}^{n-p}(-1)^k x_{i-k}x_{i+k}$ for $i=2,\cdots,r+1$,  and $r$ is such that $r=n/2$ if $n$ is even, and  $r=(n-1)/2$ if $n$ is odd  (for notation convenience, $x_0=0$ and  $x_{n+1}=u$). 

From
(\ref{dt_eqn}), we have\footnote{We can also use $ds=\frac{dx_{i+1}}{x_i}$ and in this case the normal form will be parametrized by $x_{i+1}$. We can also parameterize each component with a different parameterization. }  $ds=\frac{dx_2}{x_1}$, and 
the solution of (\ref{peqn_odes}) is given by
\[\label{psln_lincont}\begin{array}{rcl}
p_1(x,u) &=& \Phi_1(\ell_1,\cdots,\ell_{r+1})\\
p_2(x,u) &=& \Phi_2(\ell_1,\cdots,\ell_{r+1})+\int p_1(x,u) \frac{dx_2}{x_1} \\
\vdots &=& \vdots \\
p_n(x,u) &=& \Phi_n(\ell_1,\cdots,\ell_{r+1})+\int p_{n-1}(x,u) \frac{dx_2}{x_1} 
\end{array}\]
where $\Phi_i(\ell_1,\cdots,\ell_n)$, $i=1,\cdots,n$, are functions of the variables $\ell_1,\cdots,\ell_n$ and are thus constants along the characteristic curves define in (\ref{characteristics}). 
Since $\Phi_i(\ell_1,\cdots,\ell_n)$ and $\tilde{q}_i(x,u)$ satisfy the conditions
\begin{eqnarray*} 
 \frac{\Phi_i(\ell_1,\cdots,\ell_n)}{x_1^p}\bigg{|}_{x_1=0}=0 & \mbox{for} & p=0,\cdots,n-i, 
\end{eqnarray*}
thus $p_1,\cdots,p_{n-1}$   are divisible by $x_1$. Hence $p_1(x,u),\cdots p_n(x,u)$ in (\ref{psln_lincont}) are polynomials.


\end{proof}

\subsubsection{Example} Consider a two dimensional system with controllable linearization. In this case, the linear part of (\ref{sysgen}) writes as \begin{eqnarray}\dot{x}_1&=&x_2 \\ \dot{x}_2&=&u \end{eqnarray}
and the PDE (\ref{pdehom}) writes as
\[
\begin{array}{ccc}
\ds x_1 \ds \frac{\partial p_1}{\partial x_2}+x_2 \ds \frac{\partial p_1}{\partial u} &=& 0\\ 
\ds x_1 \ds \frac{\partial p_2}{\partial x_2}+x_2 \ds \frac{\partial p_2}{\partial u} &=& 0\\ 
 \end{array}
\]
 Hence, we get
\begin{subeqnarray}\slabel{pde1_dim2}
\frac{dp_1}{ds}&=&0,\\
\slabel{pde2_dim2}\frac{dp_2}{ds}-p_1&=&0
\end{subeqnarray}
with
\begin{subeqnarray}
\frac{dx_1}{ds}&=& 0 \\
\slabel{dtdx2}\frac{dx_2}{ds}&=& x_1 \\
\slabel{dtdu} \frac{du}{ds}&=& x_2 \\
\slabel{p1dim2} \frac{dp_1}{ds}&=& 0\\
\slabel{p2dim2} \frac{dp_2}{ds}-p_1&=& 0
\end{subeqnarray}

We thus deduce the following parametrization of the solution
\begin{subeqnarray}
\slabel{transform1} x_1 &=& x_{1,0} \\
\slabel{transform2} x_2 &=& x_{1,0}s+x_{2,0} \\
\slabel{transform3} u &=& \frac{x_{1,0}}{2} s^2+x_{2,0} s+x_{3,0}
\end{subeqnarray}

The first integrals are $\ell_1(x,u)=x_1$ and $\ell_2(x,u)=2x_1u-x_2^2$. From (\ref{p1dim2})-(\ref{p2dim2})  we deduce that
\[p_1(x,u) =  \Phi_1(x_1,2x_1u-x_2^2)\]
\[p_2(x,u) = \int p_1(t)  dt+\Phi_2(x_1,2x_1u-x_2^2)\]
We can use either (\ref{dtdx2}) or (\ref{dtdu}) to express the normal form as a function of $x_2$ or $u$. For example, using (\ref{dtdx2}) we deduce that
$\ds dt=\frac{dx_2}{x_1}$. Moreover, using (\ref{conditions}), we obtain conditions on $\Phi_i(\ell_1,\ell_2)$ and $\tilde{q}_i$, $i=1,2$,
\begin{subeqnarray*}
\Phi_1(\ell_1,\ell_2)|_{x_1=0}&=&0, \\
\end{subeqnarray*}

At the quadratic level these conditions imply that 
\begin{subeqnarray}
\Phi_1(\ell_1,\ell_2)|_{x_1=0}&=& \phi_{11}x_1^2+O(x,u)^3 \\ 
\Phi_2(\ell_1,\ell_2)|_{x_1=0}&=& \tilde{\phi}_{11}x_1^2+\tilde\phi_{12}(2x_1u-x_2^2)+O(x,u)^3
\end{subeqnarray}

Hence
\begin{subeqnarray}
p_1(x,u)&=&\phi_{11}x_1^2+O(x,u)^3 \\
p_2(x,u)&=&\phi_{11}x_1 x_2+\tilde\phi_{11}x_1^2+\tilde\phi_{12}(2x_1u-x_2^2)+O(x,u)^3 
\end{subeqnarray}

Hence the normal form has the form
\begin{subeqnarray}
\dot{x}_1&=&x_2+\phi_{11}x_1^2+O(x,u)^3 \\
\dot{x}_2&=&u+\phi_{11}x_1 x_2+\tilde\phi_{11}x_1^2+\tilde\phi_{12}(2x_1u-x_2^2)+O(x,u)^3  
\end{subeqnarray}

\subsection{Systems with Uncontrollable Linearization}

Now, consider the nonlinear control system  $\Sigma$ in (\ref{sysgen}) with one input, i.e. $m=1$, and assume that the system has $r$ uncontrollable modes. 
From linear  control theory we know that there exists a linear change of coordinates and feedback that allows to write the linear part as
\begin{eqnarray}
 \dot{z}&=&A_1z+O(z,x,u)^2,\\
\dot{x}&=&A_2x+B_2u+O(z,x,u)^2
\end{eqnarray}
%
where $z \in \RR^{r \times 1}$, $x \in \RR^{(n-r) \times 1}$, $A_1 \in \RR^{r \times r}$, and $(A_2,B_2)\in \RR^{(n-r) \times (n-r)} \times \RR^{(n-r)\times 1}$ are in the Brunovsk\`y form.

In this case, ${\cal A}=\left(\begin{array}{rcl}
A_1 & 0\\ A_2 & B_2 
\end{array}\right)$,  $\tilde{x}=(z,x,u)^T$ in the PDE (\ref{pdehom}). Let's note that when $r=0$ we recover the case in the preceding section and we can find a general explicit solution. However, when $r \ne 0$ a general solution is not as easily found and depends on $A_0$. We'll illustrate the method through an example.

\subsubsection{Example}
Consider the system whose linear part writes as
\[\left\{\begin{array}{rcl} \dot{z}&=&O(z,x,u)^2,\\ \dot{x}_1&=&x_2 +O(z,x,u)^2\\ \dot{x}_2&=&u+O(z,x,u)^2 \end{array}\right. \]
This system has uncontrollable linearization and the uncontrollable dynamics corresponds to the $z-$dynamics.

The elements of the normal form satisfy the PDE
\[\label{pde_dim2}\left\{\begin{array}{rcl} \ds z\frac{\partial p_1}{\partial x_2} + x_2 \frac{\partial p_1}{\partial u} &=&0\\ 
  \ds z\frac{\partial p_2}{\partial x_2} + x_2 \frac{\partial p_2}{\partial u} &=&0\\ 
  \ds z\frac{\partial p_3}{\partial x_2} + x_2 \frac{\partial p_3}{\partial u}-p_2 &=&0
  \end{array} \right.\]
The equation of the characteristics is
\[\left\{\begin{array}{rcl} \ds \frac{d p_1}{d s}  &=& 0\\ 
 \ds \frac{d p_2}{d s}  &=&0 \\ 
  \ds \frac{d p_3}{d s}  &=& p_2
  \end{array} \right.\]
The characteristic equations are $z=c_1$, $x_1=c_2$, $x_2=c_1 s+c_3$, $u=\frac{c_1}{2}s^2+c_3s+c_4$.  We can either parametrize by $x_2$ or $u$ 
by writing $ds =\frac{dx_2}{z}$ or $ds=\frac{du}{x_2}$.

The solution of the system of PDEs (\ref{pde_dim2}) is 
\[\begin{array}{rcl}
p_1&=&\Psi_0(z,x_1,x_2^2-2zu)\\
p_2&=&\Psi_1(z,x_1,x_2^2-2zu)\\
p_3&=&\Psi_2(z,x_1,x_2^2-2zu)+\int p_2(z,x,u) \frac{dx_2}{z} 
\end{array}
\]
The normal form is thus given by
\[\begin{array}{rcl}
\dot{z}&=&\Psi_0(z,x_1,x_2^2-2zu)\\
\dot{x}_1&=&x_2+\Psi_1(z,x_1,x_2^2-2zu) \\
\dot{x}_2&=&u+\Psi_2(z,x_1,x_2^2-2zu)+\int p_2(z,x,u)\frac{dx_2}{z} 
\end{array}
\]



\section{Concluding remark and future extensions:} 

Given the preceding, one could think about hyper normal forms where instead of normalizing with respect to the linear term, one normalizes the quadratic term with respect to the linear term, then normalize the cubic term with respect to the sum of the linear and quadratic terms, and so forth.  This direction has been fruitful for systems without control \cite{murdock, murdock1} and its extension to the control case is the object of future research. Several other extensions are possible for this work. One could think about characterizing completely the normal form in the case of systems with uncontrollable linearization, developing the Hamiltonian case, and computing the coefficients in the normal form directly from the original system.
\bigskip

{\bf Acknowledgement:} The first author is thankful to Prof. Murdock for useful comments and for pointing out to references \cite{belitskii1, murdock1}. BH also thanks the European Union for financial support received through an International Incoming Marie Curie Fellowship.

\end{document}